\newtheorem{theorem}{Theorem}
\newtheorem{remark}[theorem]{Remark}
\newtheorem{lemma}[theorem]{Lemma}
\providecommand{\keywords}[1]
{
  \noindent \small	
  \textbf{Keywords:} #1
}
\providecommand{\amscode}[1]
{
  \noindent \small	
  \textbf{AMS subject classifications:} #1
}
\newcommand{\bsx}{\boldsymbol{x}}
\newcommand{\RR}{\mathbb{R}}
\newcommand{\ZZ}{\mathbb{Z}}
\newcommand{\Scal}{\mathcal{S}}
\newcommand{\Xcal}{\mathcal{X}}
\title{One-dimensional quasi-uniform Kronecker sequences\thanks{The work of the author is supported by JSPS KAKENHI Grant Number 23K03210.}}
\author{Takashi Goda\thanks{School of Engineering, The University of Tokyo, 7-3-1 Hongo, Bunkyo-ku, Tokyo 113-8686, Japan ({\tt goda@frcer.t.u-tokyo.ac.jp}).}}
\date{\today}
\begin{document}

\maketitle

\begin{abstract}
    In this short note, we prove that the one-dimensional Kronecker sequence $i\alpha \bmod 1, i=0,1,2,\ldots,$ is quasi-uniform if and only if $\alpha$ is a badly approximable number. Our elementary proof relies on a result on the three-gap theorem for Kronecker sequences due to Halton (1965).
\end{abstract}
\keywords{Quasi-uniformity, Kronecker sequences, three-gap theorem, continued fraction}

\amscode{11K36, 11J71, 65D12}
\section{Introduction and the main result}
Motivated primarily by applications to scattered data approximation \cite{wendland2005scattered}, we are concerned with the quasi-uniformity of infinite sequences of points. Let $\Xcal$ be a compact convex subset of $\RR^d$ for a dimension $d$ with $\mathrm{vol}(\Xcal)>0$. For an infinite sequence of points $\Scal=(\bsx_i)_{i=0,1,2,\ldots}$ in $\Xcal$, we say that $\Scal$ is \emph{quasi-uniform} over $\Xcal$ if there exists a constant $C>1$ such that
\[ \rho_n(\Scal):=\frac{h_n(\Scal)}{q_n(\Scal)}\leq C\]
for all positive integers $n$, where $h_n(\Scal)$ and $q_n(\Scal)$ denote the fill distance and the separation radius of the first $n$ points of $\Scal$, respectively, i.e.,
\[ h_n(\Scal):=\sup_{\bsx\in \Xcal}\min_{0\leq i<n}\|\bsx-\bsx_i\|\quad \text{and}\quad q_n(\Scal):=\frac{1}{2}\min_{0\leq i<j<n}\|\bsx_i-\bsx_j\|,\]
with $\|\cdot\|$ representing the Euclidean norm on $\RR^d$. Note that $\rho_n(\Scal)$ is known as the \emph{mesh ratio} of the first $n$ points of $\Scal$. From the definition, it is evident that $h_n(\Scal)\geq q_n(\Scal)$, and thus $\rho_n(\Scal)\geq 1$. Quasi-uniformity refers to the property of infinite sequences in which successive points fill the domain $\Xcal$ evenly while keeping some distances from each other. This property is desirable for sampling nodes for function evaluation in scattered data approximation \cite{narcowich2006sobolev,teckentrup2020convergence,wynne2021convergence}. 

Another popular measure of uniformity is the discrepancy, which has been widely studied in the field of quasi-Monte Carlo (QMC) methods \cite{drmotatichy1997sequences,niederreiter1992random}. In fact, the fill distance $h_n(\Scal)$ has also been studied in this context, often referred to as ``dispersion'' instead of ``fill distance''. A sequence $\Scal$ in the unit cube $[0,1]^d$ is said to be a low-dispersion sequence if $\limsup_{n\to \infty}n^{1/d}h_n(\Scal)<\infty$. Many explicit constructions of low-discrepancy sequences have already been proven to exhibit low dispersion as well \cite[Chapter~6]{niederreiter1992random}. However, it remains largely unknown whether low-discrepancy or low-dispersion sequences are also quasi-uniform. While a recent paper by the author \cite{goda2024sobol} has proven that the Sobol' sequence \cite{sobol67distribution}, one of the best-known low-discrepancy sequences, is not quasi-uniform in dimension $2$ over the unit square $[0,1]^2$, it is far from trivial whether higher-dimensional Sobol' sequences or other common low discrepancy sequences are quasi-uniform over the $d$-dimensional unit cube $[0,1]^d$. 

In this short note, we focus on the simplest one-dimensional case and investigate the quasi-uniformity of one of the well-known sequences, the Kronecker sequence, over the unit interval $[0,1]$. The Kronecker sequence is defined as
\begin{align}\label{eq:kronecker_seq}
x_i=i\alpha \bmod 1,\quad i=0,1,2,\ldots,
\end{align}
for a real number $\alpha>0$. Here and in what follows, we adopt the notation $x\bmod 1$ to denote the fractional part of $x$, which is defined as the unique number $x'\in [0,1)$ such that $x-x'$ is an integer. This differs from the convention where $x\bmod 1$ denotes the coset $x+\ZZ$ in the quotient group $\RR/\ZZ$. It is known that this sequence achieves the star discrepancy of $O((\log n)/n)$ if and only if $\alpha$ is irrational and its continued fraction expansion $[a_0;a_1,a_2,\ldots]$ satisfies $(1/m)\sum_{j=1}^{m}a_j$ being uniformly bounded above for all $m\geq 1$ \cite[Corollary~1.65]{drmotatichy1997sequences}. This order is best possible from a general lower bound on the star discrepancy by Schmidt \cite{schmidt1972irregularities}. Additionally, it has been shown in \cite{drobot1986dispersion,niederreiter1984measure} that the Kronecker sequence is also a low-dispersion sequence if and only if $\alpha$ is a badly approximable number, i.e., $\alpha=[a_0;a_1,a_2,\ldots]$ is irrational and $a_m$ is uniformly bounded above for all $m\geq 1$. Beyond that, Kronecker sequences have recently been studied in terms of (week) Poissonian pair correlations \cite{weiss2023remarks,weiss2022sequences}.

In this note, we prove the following result on the quasi-uniformity of Kronecker sequences. It turns out that the ``if and only if'' condition coincides with the one for the Kronecker sequence to be a low-dispersion sequence. As we will see, our elementary proof relies on a result on the three-gap theorem for Kronecker sequences due to Halton (1965).

\begin{theorem}\label{thm}
    The Kronecker sequence $\Scal=(x_i)_{i=0,1,2,\ldots}$, defined by \eqref{eq:kronecker_seq}, is quasi-uniform over $[0,1]$ if and only if $\alpha$ is a badly approximable number. 
\end{theorem}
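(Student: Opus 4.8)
The plan is to translate the mesh ratio into the gap structure of the first $n$ points and then read off everything from Halton's explicit three-gap formulas in terms of the continued fraction of $\alpha$. First I would dispose of the rational case: if $\alpha=p/q$ in lowest terms, then $\Scal$ visits only the $q$ points $0,1/q,\ldots,(q-1)/q$ and then repeats, so $q_n(\Scal)=0$ for every $n>q$ while $h_n(\Scal)\ge 1/(2q)>0$; hence $\rho_n(\Scal)=\infty$ and $\Scal$ is not quasi-uniform, which is consistent with the claim since badly approximable numbers are irrational. So assume $\alpha$ irrational, and (since the integer part of $\alpha$ affects neither $\{i\alpha\}$ nor the digits $a_1,a_2,\ldots$) take $\alpha\in(0,1)$ with $\alpha=[0;a_1,a_2,\ldots]$, convergents $p_k/q_k$, and $\eta_k:=|q_k\alpha-p_k|$ (which equals $\|q_k\alpha\|$ for $k\ge 1$), satisfying $\eta_{k-1}=a_{k+1}\eta_k+\eta_{k+1}$ and $\eta_0>\eta_1>\cdots\to 0$.

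Next I would set up a dictionary between $h_n,q_n$ and arcs. Sorting $x_0,\ldots,x_{n-1}$ as $0=y_0<y_1<\cdots<y_{n-1}$ and viewing them on the circle $\RR/\ZZ$, they cut it into $n$ arcs: the $n-1$ ``interior'' gaps $y_{j+1}-y_j$ together with the one ``wrap-around'' gap $1-y_{n-1}$. A short check (handling the endpoints $0,1$ and the wrap-around arc separately, because $h_n$ and $q_n$ are defined through Euclidean distances on $[0,1]$, not on the circle) gives $\tfrac12 L_n\le h_n(\Scal)\le L_n$, where $L_n$ is the longest arc, and $2q_n(\Scal)$ equals the shortest interior gap, so in particular $2q_n(\Scal)\ge \ell_n$, the shortest arc. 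Then I would invoke Halton's theorem: for $n\ge q_0+q_1$ write uniquely $n=mq_k+q_{k-1}+r$ with $k\ge 1$, $1\le m\le a_{k+1}$, $0\le r<q_k$; the $n$ arcs then have lengths among $\{\eta_{k-1}-(m-1)\eta_k,\ \eta_{k-1}-m\eta_k,\ \eta_k\}$, occurring $q_k-r$, $r$, and $(m-1)q_k+q_{k-1}+r$ times respectively. All three candidate lengths lie in $[\eta_{k+1},\eta_{k-1}]$, and the longest of them is $\eta_{k-1}-(m-1)\eta_k$.

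For the ``if'' direction, suppose $\alpha$ is badly approximable, say $a_j\le M$ for all $j$. Then $h_n(\Scal)\le L_n\le\eta_{k-1}$ and $2q_n(\Scal)\ge\ell_n\ge\eta_{k+1}$, so $\rho_n(\Scal)\le 2\eta_{k-1}/\eta_{k+1}$; iterating $\eta_{j-1}=a_{j+1}\eta_j+\eta_{j+1}\le(M+1)\eta_j$ twice gives $\eta_{k-1}\le(M+1)^2\eta_{k+1}$, hence $\rho_n(\Scal)\le 2(M+1)^2$ for all $n\ge q_0+q_1$; absorbing the finitely many smaller $n$ (for which $q_n(\Scal)>0$ since $\alpha$ is irrational) into the constant yields quasi-uniformity. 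For the ``only if'' direction, suppose $\alpha$ is irrational but not badly approximable, and pick $k_1<k_2<\cdots$ with $a_{k_j+1}\to\infty$. For $k=k_j$ large, take $n=q_k+q_{k-1}$ (the case $m=1$, $r=0$): Halton's formula gives $q_k$ arcs of length $\eta_{k-1}$ and $q_{k-1}$ arcs of length $\eta_k$, and since $q_{k-1},q_k\ge 2$ at least one arc of each length is interior, so $h_n(\Scal)\ge\eta_{k-1}/2$ while $2q_n(\Scal)=\eta_k$. Therefore $\rho_n(\Scal)\ge\eta_{k-1}/\eta_k=a_{k+1}+\eta_{k+1}/\eta_k>a_{k_j+1}\to\infty$, so $\Scal$ is not quasi-uniform, completing the proof.

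The only genuinely fiddly point will be the first step of the dictionary—keeping track of the endpoints and the wrap-around arc so that the circle picture faithfully bounds the Euclidean quantities $h_n$ and $q_n$—together with making sure, via the multiplicities in Halton's theorem, that the arc of length $\eta_{k-1}$ and an arc of length $\eta_k$ are actually interior at $n=q_k+q_{k-1}$. Everything else is a direct computation once Halton's gap lengths and multiplicities are in hand.
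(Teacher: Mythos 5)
Your proposal is correct and follows essentially the same route as the paper: dispose of the rational case via periodicity, then use Halton's three-gap theorem to bound the mesh ratio by the ratio of the extreme gap lengths for the ``if'' direction, and take $n=q_k+q_{k-1}$ to force $\rho_n\gtrsim\eta_{k-1}/\eta_k\geq a_{k+1}$ for the ``only if'' direction. The only differences are cosmetic: you phrase $\eta_k$ via convergents rather than the paper's products of $\alpha_j$'s, you are slightly more explicit about the endpoint/wrap-around bookkeeping, and your constant $2(M+1)^2$ is a bit weaker than the paper's $2+2\sup_j a_j$.
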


\begin{remark}
    The van der Corput sequence, another well-known low-discrepancy sequence in dimension $1$, can be easily proven to be quasi-uniform in any base $b \geq 2$. In fact, the greedy packing algorithm from \cite{pronzato2023quasi} reproduces the van der Corput sequence in base $2$ by placing the first point at $1/2$, and the mesh ratio $\rho_n$ is bounded above by $2$ for any $n$.
\end{remark}

\section{Proof}
As introduced already, let us denote the continued fraction of $\alpha>0$ by $[a_0;a_1,a_2,\ldots]$, i.e.,
\[ \alpha=a_0+\frac{1}{\displaystyle a_1+\frac{1}{\displaystyle a_2+\frac{1}{\ddots}}}.\]
This continued fraction is finite if and only if $\alpha$ is rational. In such cases, the corresponding Kronecker sequence becomes periodic, with only finitely many distinct values appearing infinitely often. Consequently, there exists $n_0\geq 1$ such that the separation radius $q_n$ equals $0$ for all $n\geq n_0$, making the mesh ratio $\rho_n$ unbounded. We will now proceed under the assumption that $\alpha$ is irrational, implying an infinite continued fraction.

Let us introduce the following notation:
\[ \alpha_0=\alpha \bmod 1 \quad \text{and}\quad \alpha_{m+1}= 1/\alpha_{m}\bmod 1,\quad  m=0,1,2,\ldots. \]
In fact, it is easy to check that $\alpha_m=[0;a_{m+1},a_{m+2},\ldots]$ for all $m$. Furthermore, let
\[ \eta_{-1}=1\quad \text{and}\quad \eta_m=\prod_{j=0}^{m}\alpha_m,\quad  m=0,1,2,\ldots , \]
\[ s_0=1,\quad s_1=a_1, \quad \text{and}\quad s_{m+2}=a_{m+2}s_{m+1}+s_{m},\quad m=0,1,2,\ldots , \]
and
\[ n_0=1,\quad \text{and}\quad n_m=s_m+s_{m-1},\quad m=1,2,3,\ldots . \]
Then the following result was proven by Halton \cite[Corollary~2]{halton1965distribution} about the famous three-gap theorem for Kronecker sequences \cite{marklof2017three,sos1958distribution,swierczkowski1959successive,weiss2020deducing}.

\begin{lemma}[Halton]\label{lem:3-gap}
    Let $n_m\leq n< n_{m+1}$ for some $m\geq 0$, and consider the first $n$ points of the Kronecker sequence \eqref{eq:kronecker_seq} with an irrational $\alpha$. Rearrange these points in ascending order and denote them by $x^{(0)}\leq x^{(1)}\leq \cdots \leq x^{(n-1)}$ with $x^{(0)}=0$ and $x^{(n-1)}<1=x^{(n)}$. Let $n=n_m+hs_m+k$ with $0\leq h\leq a_{m+1}-1$ and $0\leq k\leq s_m-1$. Then, among the $n$ gap lengths $x^{(1)}-x^{(0)},x^{(2)}-x^{(1)},\ldots,x^{(n)}-x^{(n-1)}$, there exist at most three distinct values. In particular, $s_{m-1}+hs_m+k$ lengths are equal to $\eta_m$, $s_m-k$ lengths are equal to $\eta_{m-1}-h\eta_m$, and $k$ lengths are equal to $\eta_{m-1}-(h+1)\eta_m$.
\end{lemma}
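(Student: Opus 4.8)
The plan is to pass to the circle $\RR/\ZZ$, on which the $n$ points $x_0,\dots,x_{n-1}$ (write $P_n$ for this set) cut out exactly $n$ arcs whose lengths are the gaps in question. Before anything else I would record two continued-fraction facts. First, the best-approximation identity $s_m\alpha\equiv(-1)^m\eta_m\pmod 1$, so that $x_{s_m}$ lies at distance $\eta_m$ from $0$, on the side of $0$ determined by the parity of $m$; I would obtain this from the multiplicative relation $\eta_m=\alpha_m\eta_{m-1}$ together with the standard convergent formula $s_m\alpha-p_m=(-1)^m/(\beta_{m+1}s_m+s_{m-1})$, whose absolute value is exactly $\eta_m$. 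Second, the splitting relation $\eta_{m-1}=a_{m+1}\eta_m+\eta_{m+1}$, which shows that the intermediate indices $s_{m-1}+js_m$ for $0\le j\le a_{m+1}$ land at the decreasing distances $\eta_{m-1}-j\eta_m$ on a single side of $0$, interpolating between $x_{s_{m-1}}$ and $x_{s_{m+1}}$.

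The engine of the proof would be a neighbor description for $P_n$. Let $i^+$ and $i^-$ be the indices in $\{1,\dots,n-1\}$ realizing the closest approach to $0$ from the two sides, with respective distances $\delta^+,\delta^-$; one shows $i^++i^-\ge n$. I would then prove that the clockwise neighbor of $x_j$ is $x_{j+i^+}$ with gap $\delta^+$ when $j+i^+<n$, is $x_{j-i^-}$ with gap $\delta^-$ when instead $j\ge i^-$, and has gap $\delta^++\delta^-$ in the remaining case where neither shift is available. Counting the $j$ in each case gives at most three lengths, with multiplicities $n-i^+$, $n-i^-$, and $i^++i^--n$. For $n=n_m+hs_m+k=s_{m-1}+(h+1)s_m+k$, I would identify $i^+=s_m$ and $i^-=s_{m-1}+(h+1)s_m$ (their roles and sides exchanging with the parity of $m$, and degenerating to the two-gap case when $k=0$, where $i^-$ drops to $s_{m-1}+hs_m$). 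The two recorded identities then give $\delta^+=\eta_m$, $\delta^-=\eta_{m-1}-(h+1)\eta_m$, and $\delta^++\delta^-=\eta_{m-1}-h\eta_m$, while the multiplicities become $s_{m-1}+hs_m+k$, $k$, and $s_m-k$, exactly as stated.

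The crux, and the step I expect to be the main obstacle, is twofold: justifying the ``no intervening point'' claims in the neighbor description, and pinning down $i^\pm$ and their sides through the block transitions. The first rests on the fact that $s_m$ and $s_{m+1}$ are best approximation denominators, so that no index below $n<n_{m+1}$ approaches $0$ more closely from the relevant side than the claimed $i^\pm$; the second requires care when $h$ increments (the closest index on one side jumps from $s_{m-1}+hs_m$ to $s_{m-1}+(h+1)s_m$) and when $m$ increments (the two sides swap parity, so $i^+$ and $i^-$ interchange). The recurrences $s_{m+2}=a_{m+2}s_{m+1}+s_m$ and $\eta_{m-1}=a_{m+1}\eta_m+\eta_{m+1}$ make the three multiplicities sum to $n$ automatically and absorb most of the arithmetic. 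As an equivalent and perhaps more transparent presentation, one can argue by induction on $n$: the base case $n=n_0=1$ is a single arc of length $\eta_{-1}=1$, and each newly inserted point $x_n$ splits one longest arc $\eta_{m-1}-h\eta_m$ into $\eta_m$ and $\eta_{m-1}-(h+1)\eta_m$, lowering the long count by one and raising each short count by one, with the block boundaries shifting all three labels down one continued-fraction level. I would adopt whichever of these two routes reads more cleanly, letting the continued-fraction identities carry the computational load.
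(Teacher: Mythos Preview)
The paper does not supply its own proof of this lemma: it is quoted as Corollary~2 of Halton (1965) and used as a black box in the proof of the main theorem. There is therefore nothing in the paper to compare your argument against.

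That said, your sketch is a correct outline of one of the standard proofs of the quantitative three-gap theorem. The neighbor description via the two one-sided closest approaches $i^+$ and $i^-$, the inequality $i^++i^-\ge n$ (forced because otherwise $x_{i^++i^-}$ would beat one of them), and the resulting multiplicity triple $(n-i^+,\,n-i^-,\,i^++i^--n)$ are exactly the mechanism of the classical arguments. Your identification $i^+=s_m$ and $i^-=s_{m-1}+(h+1)s_m$ (dropping to $s_{m-1}+hs_m$ when $k=0$) is correct, and the semiconvergent identity $\eta_{m-1}=a_{m+1}\eta_m+\eta_{m+1}$ indeed delivers $\delta^+=\eta_m$, $\delta^-=\eta_{m-1}-(h+1)\eta_m$, and $\delta^++\delta^-=\eta_{m-1}-h\eta_m$; the multiplicities then fall out as stated. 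The two items you flag as the crux---ruling out intervening points between $x_j$ and its claimed neighbor, and tracking $i^\pm$ through the transitions in $k$, $h$, and $m$---are real but routine once the best-approximation property of convergents and semiconvergents is available; this is precisely where Halton's original paper does its work. The inductive variant you mention (each new point splits one longest arc) is an equally valid and common presentation.
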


Now we are ready to prove Theorem~\ref{thm}.
\begin{proof}[Proof of Theorem~\ref{thm}]
    First, we prove that the Kronecker sequence is quasi-uniform for any $\alpha$ being a badly approximable number. For some $m\geq 0$, let $n=n_m+hs_m+k$ with $0\leq h\leq a_{m+1}-1$ and $0\leq k\leq s_m-1$. For the three gap lengths given in Lemma~\ref{lem:3-gap}, we have $\eta_{m-1}-h\eta_m=\eta_m+\left( \eta_{m-1}-(h+1)\eta_m\right)$, i.e., one length is the sum of the other two. Therefore, for the separation radius, we have
    \begin{align*}
        q_n(\Scal) & =\frac{1}{2}\min_{0\leq i<j<n}|x_i-x_j| = \frac{1}{2}\min_{0\leq i<n-1}|x^{(i+1)}-x^{(i)}| \geq \frac{1}{2}\min_{0\leq i<n}|x^{(i+1)}-x^{(i)}|\\
        & = \frac{1}{2}\min\left\{ \eta_m, \eta_{m-1}-h\eta_m, \eta_{m-1}-(h+1)\eta_m\right\}=\frac{1}{2}\min\left\{ \eta_m, \eta_{m-1}-(h+1)\eta_m\right\}.
    \end{align*}
    Similarly, for the fill distance, we have
    \begin{align*}
        h_n(\Scal) & =\sup_{x\in [0,1]}\min_{0\leq i<n}|x-x_i|=\max\left\{\frac{1}{2}\max_{0\leq i<n-1}|x^{(i+1)}-x^{(i)}|, |x^{(n)}-x^{(n-1)}|\right\}\\
        & \leq \max_{0\leq i<n}|x^{(i+1)}-x^{(i)}|=\max\left\{ \eta_m, \eta_{m-1}-h\eta_m, \eta_{m-1}-(h+1)\eta_m\right\}=\eta_{m-1}-h\eta_m.
    \end{align*}
    These lead to an upper bound on the mesh ratio as
    \begin{align*}
        \rho_n(\Scal) & =\frac{h_n(\Scal)}{q_n(\Scal)} \leq \frac{2(\eta_{m-1}-h\eta_m)}{\min\left\{ \eta_m, \eta_{m-1}-(h+1)\eta_m\right\}}= \frac{2(1-h\alpha_m)}{\min(\alpha_m,1-(h+1)\alpha_m)}.
    \end{align*}
    Since it immediately follows from the definition of $\alpha_m$'s that
    \[ \frac{1}{a_{m+1}+1}<\alpha_m<\frac{1}{a_{m+1}+1/a_{m+2}}, \]
    the mesh ratio is further bounded above by
    \begin{align*}
        \rho_n(\Scal) & \leq 2\max\left\{ \frac{1-h\alpha_m}{\alpha_m}, \frac{1-h\alpha_m}{1-(h+1)\alpha_m}\right\} = 2\max\left\{ \frac{1}{\alpha_m}-h, \frac{1}{\displaystyle 1-\frac{1}{\displaystyle \frac{1}{\alpha_m}-h}}\right\}\\
        & \leq 2\max\left\{ a_{m+1}+1-h, \frac{1}{\displaystyle 1-\frac{1}{\displaystyle a_{m+1}+1/a_{m+2}-h}}\right\}\\
        & \leq 2\max\left\{ a_{m+1}+1, a_{m+2}+1\right\}\leq 2+2\sup_{j\geq 1}a_j.
    \end{align*}
    Thus, if $\alpha$ is a badly approximable number, the mesh ratio is bounded by a constant $C\leq 2+2\sup_{j\geq 1}a_j$ for all $n$, showing that the corresponding Kronecker sequence is quasi-uniform.

    Next, we prove that the Kronecker sequence is not quasi-uniform if $\alpha$ is not a badly approximable number. For this, let us consider the case where $n=n_m$ for $m\geq 3$, i.e., $n=n_m+hs_m+k$ with $h=k=0$. Then, it follows from Lemma~\ref{lem:3-gap} that there exist exactly two distinct values among the $n$ gap lengths, which are $\eta_m$ for $s_{m-1}$ gaps and $\eta_{m-1}$ for $s_{m}$ gaps. Since we assume $m\geq 3$, we have $s_{m}>s_{m-1}\geq 2$. Therefore, for the separation radius, we have
    \begin{align*}
        q_n(\Scal) & =\frac{1}{2}\min_{0\leq i<j<n}|x_i-x_j| = \frac{1}{2}\min_{0\leq i<n-1}|x^{(i+1)}-x^{(i)}| = \frac{1}{2}\min_{0\leq i<n}|x^{(i+1)}-x^{(i)}|\\
        & = \frac{1}{2}\min\left\{ \eta_m, \eta_{m-1}\right\}=\frac{1}{2}\eta_m.
    \end{align*}
    Similarly, for the fill distance, we have
    \begin{align*}
        h_n(\Scal) & =\sup_{x\in [0,1]}\min_{0\leq i<n}|x-x_i|=\max\left\{\frac{1}{2}\max_{0\leq i<n-1}|x^{(i+1)}-x^{(i)}|, |x^{(n)}-x^{(n-1)}|\right\}\\
        & \geq \frac{1}{2}\max_{0\leq i<n}|x^{(i+1)}-x^{(i)}|=\frac{1}{2}\max\left\{ \eta_m, \eta_{m-1}\right\}=\frac{1}{2}\eta_{m-1}.
    \end{align*}
    This leads to a lower bound on the mesh ratio as
    \begin{align*}
        \rho_n(\Scal) & =\frac{h_n(\Scal)}{q_n(\Scal)} \geq \frac{\eta_{m-1}}{\eta_m}= \frac{1}{\alpha_m}\geq a_{m+1}+\frac{1}{a_{m+2}}.
    \end{align*}
    Thus, we have shown that the mesh ratio cannot be uniformly bounded above in all $n$ if $a_m$'s are not uniformly bounded above, implying that the corresponding Kronecker sequence is not quasi-uniform.
\end{proof}

\section*{Acknowledgment}
The author would like to thank Kosuke Suzuki for discussions and comments.

\bibliographystyle{plain}
\bibliography{ref.bib}

\end{document}